\theoremstyle{plain}
\newtheorem{thm}{Theorem}[section]
\newtheorem{prop}[thm]{Proposition}
\newtheorem{lem}[thm]{Lemma}
\newtheorem{cor}[thm]{Corollary}
\theoremstyle{definition}
\theoremstyle{remark}
\newtheorem{rem}[thm]{Remark}
\numberwithin{equation}{section}
\DeclareMathOperator{\dom}{dom}		
\DeclareMathOperator{\tr}{tr}		
\def \bR {\mathbb R}		
\def \bP {\mathbb P}		%
\def \cE {\mathcal E}		%
\def \cH {\mathcal H}		%
\def \cK {\mathcal K}		%
\def \cL {\mathcal L}		%
\def \cP {\mathcal P}		%
\def \cR {\mathcal R}		%
\def \sF {\mathscr F}		%
\begin{document}

\pagestyle{plain}
\bibliographystyle{plain}

\title{An invariance principle for stochastic heat equations \\with periodic coefficients}
\author{Lu \textsc{Xu}\footnote{l-xu@math.kyushu-u.ac.jp} \quad \textit{Kyushu University}}
\date{}
\maketitle

\noindent{\bf Abstract} We investigate the asymptotic behaviors of the solution $u(t, \cdot)$ to a stochastic heat equation with a periodic, gradient-type nonlinear term. We extend the central limit theorem for finite-dimensional diffusions presented in \cite[\S9.1]{KLO12} to infinite-dimensional settings. Due to our results, as $t \rightarrow \infty$, $\frac{1}{\sqrt{t}}u(t, \cdot)$ converges weakly to a centered Gaussian variable whose covariance operator is described through Poisson's equations. Different from the finite-dimensional case, the fluctuation in space vanishes in the limit distribution. Furthermore, we verify the tightness and present an invariance principle for $\{\epsilon u(\epsilon^{-2}t, \cdot)\}_{t \in [0, T]}$ as $\epsilon \downarrow 0$. 





\begin{section}{Introduction and main results}

A central limit theorem for additive functionals of reversible Markov chains and processes is established in \cite{KV86}, based on a martingale-decomposition of the targeted functional. Their method is extended to non-reversible cases with various kinds of conditions (e.g., \cite{LY97,MW00,OS95,V95}). Combined with It\^{o} formula, this method can be used to prove the central limit theorem for finite-dimensional diffusion processes with periodic coefficients, as illustrated in \cite[\S9.1]{KLO12}. In the present paper, we extend this strategy to the case of infinite-dimensional diffusion processes. 

Consider here a stochastic PDE on the unit interval $[0, 1]$ with a gradient-type nonlinear term. Precisely, suppose $(\Omega, \sF, \bP)$ to be a complete probability space equipped with a filtration of $\sigma$-fields $\{\sF_t \subseteq \sF; t \geq 0\}$ satisfying the usual conditions, and $W_t = W(t, \cdot)$ to be a standard cylindrical Brownian motion on $(\Omega, \sF, \bP)$ adapted to $\{\sF_t\}$. We consider the equation 
\begin{equation} \label{spde} \left\{
\begin{aligned}
&\partial_t u(t, x) = \frac{1}{2} \partial_x^2 u(t, x) - V'_x(u(t, x)) + \dot W(t, x), &&t > 0, x \in (0, 1), \\
&\partial_x u(t, 0) = \partial_x u(t, 1) = 0, &&t > 0, \\
&u(0, x) = v(x), &&x \in [0, 1], 
\end{aligned} \right. 
\end{equation}
where $V_x$ is a $C^1$ function on $\bR$ for each $x \in [0, 1]$ and $V'_x(u) \triangleq \frac{d}{du}V_x(u)$. Equation \eqref{spde} has been studied in \cite{F83} for the purpose of describing the motion of a flexible Brownian string in a potential field. We assume the following conditions on $V$: 
\begin{itemize}
\item [(1)] $\forall u \in \bR$, $V_x(u)$ is Borel-measurable in $x$; 
\item [(2)] $\sup_{x \in [0, 1], u \in \bR}\left\{|V_x(u)| + |V'_x(u)|\right\} < \infty$; 
\item [(3)] $\exists$ a constant $C$, s.t. $|V'_x(u_1) - V'_x(u_2)| \leq C|u_1 - u_2|$, $\forall x \in [0, 1]$, $\forall u_1$, $u_2 \in \bR$. 
\item [(4)] $\forall x \in [0, 1]$, $V_x$ is periodic in $u$: $V_x(u) = V_x(u + 1)$. 
\end{itemize}

The motivation of studying \eqref{spde} lies in several aspects. First of all, the study on finite-dimensional diffusions shows the links between periodic and random coefficients. Thus, our results can provide some hints for the more complex case with random coefficients, via the technique of the environment seen from the observer. Second, by taking $V_x$ to be cosine functions, \eqref{spde} appears closely related to the dynamical sine-Gordon equation 
$$\partial_tu = \frac{1}{2}\Delta u + c\sin(\beta u + \theta) + \xi, $$
where $c$, $\beta$ and $\theta$ are real constants and $\xi$ is the standard space-time white noise. It is the natural dynamic associated with the usual quantum sine-Gordon model, see \cite{HS16}. From a more physical perspective, such dynamic in space dimension two is first discussed in \cite{CW78} to study the crystal growth near the roughening transition. It describes globally neutral gas of interacting charges at different temperature $\beta$, see \cite{HS16}. Here to avoid the ill-posedness of \eqref{spde} in multi-dimensional space, we restrict our discussion to one-dimensional case. 

The solution to \eqref{spde} studied in this paper is the mild solution defined as follows. Let $S(t)$ be the semigroup on $L^2([0, 1])$ generated by $\frac{1}{2}\partial_x^2$ with the Neumann boundary conditions. Then, a process $u(t) = u(t, \cdot)$ is called a mild solution to \eqref{spde} if for all $t \geq 0$, 
\begin{equation} \label{mild solution}
u(t) = S(t)v + \int_0^t S(t-r)[-V'_\cdot(u(r, \cdot))]dr + \int_0^t S(t-r)dW_r. 
\end{equation}
The existence and regularities of $u(t)$ have been discussed in \cite{F83,F91}. With conditions (1), (2), and (3), due to \cite[Theorem 2.1]{F83}, $u(t)$ uniquely exists in $C[0, 1]$ and forms a continuous Markov process. Furthermore, by \cite[Theorem 4.1]{F83}, $u(t)$ possesses an (infinite) reversible measure given by 
\begin{equation} \label{reversible measure}
\mu(dv) = \exp \left\{-2\int_0^1 V_x(v(x)) dx\right\} \mu_w(dv), 
\end{equation}
where $\mu_w$ stands for the measure on $C[0, 1]$ induced by one-dimensional Brownian motion $\{w_x; x \in [0, 1]\}$ whose initial distribution is the Lebesgue measure on $\bR$. 

Our main results are an annealed central limit theorem and an invariance principle for $u(t, \cdot)$ stated in Theorem \ref{clt} and \ref{invariance principle}. 

\begin{thm} \label{clt}
Suppose that the initial condition of $u(t, \cdot)$ is a probability measure $\nu$ absolutely continuous with respect to $\mu$ defined in \eqref{reversible measure}. Then, for all $f \in C_b(C[0, 1])$, 
\begin{equation} \label{clt-equation}
\lim_{t \rightarrow \infty}E_\nu \left|E_\bP \left[\left.f\left(\frac{u(t)}{\sqrt{t}}\right)\right|\sF_0\right] - \int_{\bR} f(y\mathbf{1})\Phi_\sigma(dy)\right| = 0, 
\end{equation}
where $y\mathbf{1}$ is the function on $[0, 1]$ taking constant value $y$, and $\Phi_\sigma$ is the density function of a one-dimensional centered Gaussian law with variance $\sigma^2$ defined in \eqref{diffusion constant}. 
\end{thm}

\begin{rem}
In Theorem \ref{clt} the fluctuation in $x$ decays asymptotically and only the height of solution remains in the limit distribution. This kind of decay occurs because the Laplacian controls the growth of the fluctuation. More precisely, if one adopts Dirichlet boundary conditions in \eqref{spde} instead of Neumann conditions, the periodicity of nonlinear term turns to be unnecessary, and $u(t, \cdot)$ converges weakly to a Brownian bridge. 
\end{rem}

\begin{thm} \label{invariance principle}
Fix some $T > 0$ arbitrarily. Under initial distribution $\nu$ which is absolutely continuous with respect to $\mu$, $\{\epsilon u(\epsilon^{-2}t), t \in [0, T]\}$ converges weakly to a Gaussian process $\{\sigma B_t \cdot \mathbf{1}, t \in [0, T]\}$ as $\epsilon \downarrow 0$, where $B_t$ is a standard one-dimensional Brownian motion on $[0, T]$ and $\sigma$ is the constant defined in \eqref{diffusion constant}. 
\end{thm}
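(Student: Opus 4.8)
The plan is to follow the classical two-step route to an invariance principle: first establish convergence of the finite-dimensional distributions of $\{\epsilon u(\epsilon^{-2}t)\}_{t\in[0,T]}$, then prove tightness in the path space $C([0,T],C[0,1])$, and finally invoke Prohorov's theorem to upgrade to weak convergence. Since the limiting process $\sigma B_t\cdot\mathbf 1$ is supported on spatially constant functions, the natural first move is to split $u$ into its height and its fluctuation, $u(t,x)=h(t)+\tilde u(t,x)$ with $h(t)=\langle u(t),\mathbf 1\rangle=\int_0^1 u(t,x)\,dx$ and $\langle\tilde u(t),\mathbf 1\rangle=0$. Projecting \eqref{spde} onto $\mathbf 1$ and using the Neumann boundary condition to kill the Laplacian term, the height obeys $dh(t)=-\langle V'_\cdot(u(t)),\mathbf 1\rangle\,dt+d\beta(t)$, where $\beta(t)=\langle W_t,\mathbf 1\rangle$ is a standard one-dimensional Brownian motion. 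By periodicity (condition (4)), $\langle V'_\cdot(u),\mathbf 1\rangle$ depends only on the configuration modulo integer vertical shifts, so it is a bounded function $g(\eta(t))$ of the folded (``environment seen from the solution'') process $\eta(t)$, which is reversible with respect to the finite probability measure $\bar\mu$ obtained by folding $\mu$ in \eqref{reversible measure} over the height lattice.

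For the finite-dimensional distributions I would reuse the machinery behind Theorem \ref{clt}. The additive functional $\int_0^t g(\eta(r))\,dr$ admits, by the Kipnis--Varadhan decomposition \cite{KV86} used there, an approximation $h(t)=M(t)+R(t)$ by a square-integrable martingale $M$ with stationary increments plus a remainder $R$ that is negligible after diffusive rescaling. Applying the martingale invariance principle to $\epsilon M(\epsilon^{-2}\cdot)$, whose quadratic variation converges to $\sigma^2 t$ by the identification of the diffusion constant and which satisfies the Lindeberg condition because $M$ has no jumps, yields $\epsilon M(\epsilon^{-2}\cdot)\Rightarrow\sigma B$ at the process level, while $\sup_{t\le T}\epsilon|R(\epsilon^{-2}t)|\to0$ in probability. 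Hence $\epsilon h(\epsilon^{-2}\cdot)\Rightarrow\sigma B$. That the fluctuation contributes nothing in the limit is exactly the spectral-gap phenomenon noted in the Remark: on $\mathbf 1^\perp$ the operator $\tfrac12\partial_x^2$ has a gap, so $\tilde u$ is a stationary, mean-reverting field of order one, and a maximal inequality gives $\sup_{t\le T}\epsilon\|\tilde u(\epsilon^{-2}t)\|_{C[0,1]}\to0$ in probability. These two facts pin down all the finite-dimensional limits.

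The substantial work is the tightness, which must be checked in both the time and the space variable. In space, the estimate just mentioned for $\tilde u$, together with $h(t)\mathbf 1$ being spatially constant, shows that the rescaled field is spatially equicontinuous with vanishing oscillation, so spatial tightness is inherited from the regularity of the mild solution \eqref{mild solution} through semigroup smoothing. In time the crux is a moment bound on the increments of the height that is \emph{uniform in $\epsilon$}: I would aim for $E_\nu\big|\epsilon h(\epsilon^{-2}t)-\epsilon h(\epsilon^{-2}s)\big|^{4}\le C|t-s|^{2}$, which by the Kolmogorov criterion secures temporal tightness. The Brownian piece $\epsilon(\beta(\epsilon^{-2}t)-\beta(\epsilon^{-2}s))$ satisfies this exactly, so everything reduces to controlling $\epsilon\int_{\epsilon^{-2}s}^{\epsilon^{-2}t} g(\eta(r))\,dr$. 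I expect this to be the main obstacle: $g$ is bounded but the time window $\epsilon^{-2}|t-s|$ is long, so only the fact that $g$ lies in $H_{-1}$, equivalently the Kipnis--Varadhan variance bound $E\big[(\int_0^\tau g\,dr)^2\big]\le C\tau\|g\|_{-1}^2$ and its higher-moment analogue through the martingale approximation, prevents the functional from growing ballistically and delivers the correct scaling $C|t-s|$ rather than $C|t-s|^{1/2}$ or worse.

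Two points then need care. First, the reversible measure $\mu$ is infinite, so all stationary-process estimates above should be run under the finite folded measure $\bar\mu$ for $\eta$ and then transferred to the given initial law $\nu$. Since $\nu\ll\mu$, its folded version is absolutely continuous with respect to $\bar\mu$, and tightness under $\bar\mu$ transfers to tightness under $\nu$ by the standard truncation argument: splitting the density at level $M$ bounds $\nu(\,\cdot\,)$ by $M\,\bar\mu(\,\cdot\,)$ plus an error tending to $0$ as $M\to\infty$. Second, the fourth-moment bound for the additive functional should be obtained from the martingale approximation rather than directly, using Burkholder--Davis--Gundy on $M$ together with the $L^2$ smallness of $R$, so that the remainder does not spoil the scaling. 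Assembling the finite-dimensional convergence with the tightness then completes the proof.
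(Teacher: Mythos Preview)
Your strategy coincides with the paper's: split $u$ into the spatial mean $h(t)=\langle u(t),e_0\rangle$ and the orthogonal fluctuation, handle the mean via the Kipnis--Varadhan martingale approximation, and argue that the fluctuation disappears under diffusive rescaling. For the height the paper simply quotes \cite[Theorem~2.32]{KLO12} to get the functional CLT (hence tightness), so your explicit fourth-moment/BDG derivation is just a more self-contained way of reaching the same conclusion.

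The one place where the paper is more concrete than your outline is the fluctuation. You write that ``a maximal inequality gives $\sup_{t\le T}\epsilon\|\tilde u(\epsilon^{-2}t)\|_{C[0,1]}\to0$'', but $\|\tilde u(t)\|_\infty$ is neither a submartingale nor purely Gaussian, so it is not obvious which maximal inequality you intend. The paper sidesteps this by returning to the mild formulation \eqref{mild solution} and decomposing $\tilde u=X^\perp+Y^\perp+Z^\perp$ (propagated initial datum, drift integral, stochastic convolution, each projected onto $e_0^\perp$). Contractivity of $S(t)$ kills $\epsilon X^\perp$; for the other two pieces one computes, using the explicit eigenvalues $\pi^2 j^2$, the uniform-in-time Kolmogorov bounds
\[
E|Y^\perp(t_1,x_1)-Y^\perp(t_2,x_2)|^{2p}\le C(|t_1-t_2|^{p}+|x_1-x_2|^{p}),\qquad
E|Z^\perp(t_1,x_1)-Z^\perp(t_2,x_2)|^{2p}\le C(|t_1-t_2|^{p/2}+|x_1-x_2|^{p}),
\]
with $C$ independent of $t_1,t_2\ge0$. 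After the rescaling $(t,x)\mapsto(\epsilon^{-2}t,x)$ and multiplication by $\epsilon$ these inequalities retain (or improve) their constants, so Kolmogorov--Chentsov gives tightness of $\epsilon Y^\perp(\epsilon^{-2}\cdot)$ and $\epsilon Z^\perp(\epsilon^{-2}\cdot)$ directly in $C([0,T]\times[0,1])\cong C([0,T],E)$. This is precisely the ``semigroup smoothing'' you allude to, made quantitative; if you pursue your maximal-inequality route you will almost certainly end up proving these same estimates.
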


The paper is arranged as follows. In \S2, we study the details of the Kolmogrov generator related to \eqref{spde} and present an It\^{o} formula necessary for our proof in Corollary \ref{Ito}. In \S3, we state the proofs of the main results along the ideas in \cite{KV86} and \cite{KLO12}. 

We end this section by introducing some useful notations. Throughout this paper, $E$ is the Banach space $C[0, 1]$, and $E_0$ is its subspace consisting of twice continuously differentiable functions $\varphi$ such that $\varphi'(0) = \varphi'(1) = 0$. $H$ is the Hilbert space $L^2[0, 1]$, equipped with the inner product $\langle \cdot, \cdot \rangle$ and the corresponding norm $\|\cdot\|$. Let $e_0 = \mathbf{1}$ and $e_j(x) = \sqrt{2}\cos(\pi jx)$ for $j \geq 1$ and $x \in [0, 1]$, then $e_j \in E_0$ and $\{e_j\}_{j=0}^\infty$ forms a complete orthonormal system of $H$. 

\end{section}

\begin{section}{Reduced process and its It\^o formula}

Before discussing the generator of \eqref{spde}, we first realize a reduction of $E$. Consider an equivalence relation in $E$ such that $v_1 \sim v_2$ if and only if $v_1-v_2$ equals to some integer-valued constant function. Define $\dot v$ to be the equivalence class of $v$, and $\dot{E}$ to be the quotient space $E/\sim$. From now on we identify $\dot v$ with its representative $v$ such that $v(0) \in [0, 1)$. A function $f$ on $E$ which satisfies that $f(v + \mathbf{1}) = f(v)$ can be automatically regarded as a function on $\dot{E}$, and vice versa. Furthermore, given a function $f$ on $\dot E$ which can be continuously extended to a Fr\'echet differentiable function $\bar{f}$ on $H$ with derivative $D\bar{f}$, we simply write $Df$ in short of $D\bar{f}|_{\dot E}$, and call it the derivative of $f$ on $E$. 

We explain the benefits of adopting such a reduction. To specify the initial condition in \eqref{spde}, we sometimes write $u^v(t)$ instead of $u(t)$. Recalling that $V_x$ is periodic with period 1, the uniqueness of the mild solution shows that $u^{v+\mathbf{1}}(t) = u^v(t) + \mathbf{1}$ almost surely. This makes the following ``reduced process" be well-defined: 
$$\dot u(t) \triangleq \dot{(u(t))}, \quad \forall t \in [0, +\infty), $$
where the right-hand side means the equivalence class of $u(t)$ for each $\omega \in \Omega$. This reduction then defines an $\dot E$-valued Markov process and allows us to convert the infinite measure $\mu$ defined in \eqref{reversible measure} into a finite one. Precisely, suppose $\{w'_x\}_{x \in [0, 1]}$ to be a one-dimensional Brownian motion whose initial distribution is the Lebesgue measure on $[0, 1)$, then 
$$\pi(d\dot{v}) = \frac{1}{Z} \exp \left\{-2\int_0^1 V_x(\dot{v}(x)) dx \right\}\pi_w(d\dot v), $$
where $\pi_w$ stands for the distribution of $w'_x$ on $\dot E$ and $Z$ is the normalization constant. Apparently, $\dot u(t)$ is reversible under $\pi$. 

For further discussion on the ergodicity and generator, consider an abstract stochastic differential equation in the Hilbert space $H$, written as 
\begin{equation} \label{spde-abstract} \left\{
\begin{aligned}
&dX(t) = [AX(t) - DV(X(t))]dt + dW_t, \\
&X(0) = h \in H, 
\end{aligned} \right. 
\end{equation}
where $A = \frac{1}{2}\partial_x^2$ with domain $\dom(A) = E_0$, $V(h) = \int_0^1 V_x(h(x))dx$ for $h \in H$ and $DV$ stands for the Fr\'echet derivative of $V$. With conditions (2) and (3), the mild solution $X(t)$ to \eqref{spde-abstract} exists uniquely in $H$, see, e.g., \cite[p. 194, Theorem 7.6]{PZ92}. 

The probability measure $\pi$ is ergodic. Indeed, denote the Hilbert-Schmidt norm by $\|\cdot\|_2$ and we know that $\int_0^t \|S(r)\|_2^2dr < \infty$ for all $t > 0$, thus $X(t)$ is strong Feller and irreducible, see \cite{PZ95}. Such process is regular, i.e., all of its transition probabilities are equivalent to each other. Since $u(t)$ in \eqref{spde} coincides with $X(t)$ when $h = v \in E$, $u(t)$ and $\dot u(t)$ are also regular. Consequently, $\pi$ is the unique invariant probability measure of $\dot u(t)$. 

Next we discuss the generator of the semigroup determined by $\dot u(t)$ on $\cH = L^2(\dot E, \pi)$. We write the inner product in $\cH$ by $\langle \cdot, \cdot \rangle_\pi$ and the norm by $\|\cdot\|_\pi$. Let $\dot\cP_t$ be the semigroup induced by $\dot u(t)$ on $\cH$. Our aim is to define a Markov pre-generator on a relatively simple subspace of $\cH$ and prove that its closure generates $\dot\cP_t$. Notice that similar discussion can be found in \cite{PT01}, where they consider the dissipativity of a kind of perturbed Kolmogorov operators with finite invariant measures. We adopt their ideas, however in our model 
$$\sup_{t > 0}\left\{\tr\left[\int_0^t S(r)S^*(r)dr\right]\right\} = \sup_{t > 0}\left\{t + \sum_{j=1}^\infty \frac{1- e^{-\pi^2j^2t}}{\pi^2j^2}\right\} = \infty, $$
which is in contract to \cite[Hypothesis 1(\romannumeral2)]{PT01}. 

Let $\cE_A(H)$ be the linear span of all real and imaginary parts of functions on $H$ of the form $h \mapsto e^{i\langle l, h \rangle}$ where $l \in E_0$, and $\cE_A(\dot E)$ be the collection of functions in $\cE_A(H)$ such that $f(v) = f(v + \mathbf{1})$ for all $v \in E$. $\cE_A(\dot E)$ can be viewed as a sub-algebra of $C_b(\dot E)$. It is not hard to verify that $\cE_A(\dot E)$ is dense in $\cH$ (cf. \cite{MR92}). Consider the following Ornstein-Uhlenbeck semigroup on $\cE_A(H)$: 
$$\cR_tf(h) = \int_H f(S(t)h + l) \mathcal{N}_{Q_t}(dl)$$
where $Q_t = \int_0^t S(r)S^*(r)dr$ and $\mathcal{N}_{Q_t}$ is the centered Gaussian law on $H$ whose covariance operator is $Q_t$. Notice that for any $f_l(h) = e^{i\langle l, h \rangle}$ (\cite[Remark 2.2]{PT01}), 
\begin{equation} \label{computation-core}
\cR_tf_l = e^{-\frac{1}{2}\langle Q_tl, l \rangle}f_{S^*(t)l}, 
\end{equation}
therefore the restriction of $\cR_t$ on $\cE_A(\dot E)$ can then be continuously extended to a contraction $\dot\cR_t$ on $\cH$, and $\{\dot\cR_t\}$ forms a strong continuous semigroup. Denote its infinitesimal generator by $(\dom(\dot\cL), \dot\cL)$. Direct calculation shows that $\cE_A(\dot E) \subset \dom(\dot\cL)$, and 
$$\dot\cL f(\dot v) = \langle A[Df(\dot v)], v \rangle + \frac{1}{2}\tr\left[D^2f(\dot v)\right], \quad \forall f \in \cE_A(\dot E), $$
where $D$ is the Fr\'echet differential operator. Now define an operator $\dot\cK$ on $\cE_A(\dot E)$ by 
\begin{equation} \label{generator-pre}
\dot\cK f(\dot v) =  \dot\cL f(\dot v) - \langle Df(\dot v), DV(\dot v) \rangle, \quad \forall f \in \cE_A(\dot E). 
\end{equation}
The integration-by-part formula for Wiener measure shows that 
\begin{equation} \label{dirichlet form}
E_\pi \|Df\|^2 = 2\langle f, -\dot\cK f \rangle_\pi. 
\end{equation}
From \eqref{dirichlet form}, $\dot\cK $ is dissipative, and thus closable. Denote its closure on $\cH$ still by $\dot\cK $, and its domain by $\dom(\dot\cK)$. In the following part, we prove that $\dot\cK$ generates $\dot\cP_t$. To do this, it is sufficient to prove the maximal dissipativity of $\dot\cK $, i.e., the Poisson's equation 
$$\lambda f - \dot\cK f = g $$
is solvable in $\dom(\dot\cK )$ for all $\lambda > 0$ and $g \in \cH$. 

\begin{prop} \label{maximal dissipativity}
$(\dom(\dot\cK), \dot\cK)$ is maximally dissipative on $\cH$. 
\end{prop}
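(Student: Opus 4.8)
The plan is to apply the Lumer--Phillips theorem. Since $\dot{\mathcal{K}}$ is already known to be dissipative and is closed (being the closure of its restriction to $\mathcal{E}_A(\dot E)$), the bound $\|\lambda f - \dot{\mathcal{K}}f\|_\pi \ge \lambda\|f\|_\pi$ forces $\mathrm{Range}(\lambda - \dot{\mathcal{K}})$ to be closed for every $\lambda > 0$. Hence maximal dissipativity reduces to showing that this range is \emph{dense} in $\mathcal{H}$ for one fixed $\lambda > 0$, and because $\mathcal{E}_A(\dot E)$ is dense in $\mathcal{H}$ it is enough to approximate each $g \in \mathcal{E}_A(\dot E)$ by elements of $\mathrm{Range}(\lambda - \dot{\mathcal{K}})$.

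If $V \in C_b^2(H)$ this is immediate: for $g \in \mathcal{E}_A(\dot E)$ and $\lambda > |D^2V|_\infty$, Lemma \ref{lemma-core} produces $f \in C_b^1(\dot E) \cap \mathcal{D}(\dot{\mathcal{L}})$ with $\lambda f - \dot{\mathcal{L}}f + \langle Df, DV\rangle = g$, which is exactly $\lambda f - \dot{\mathcal{K}}f = g$. The one point to verify is the inclusion $C_b^1(\dot E) \cap \mathcal{D}(\dot{\mathcal{L}}) \subseteq \mathcal{D}(\dot{\mathcal{K}})$. I would deduce it from the fact that $\mathcal{E}_A(\dot E)$ is a core for $\dot{\mathcal{L}}$: choose $\phi_k \in \mathcal{E}_A(\dot E)$ with $\phi_k \to f$ and $\dot{\mathcal{L}}\phi_k \to \dot{\mathcal{L}}f$ in $\mathcal{H}$, and apply the identity \eqref{dirichlet form} to $\phi_k - \phi_m$. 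Writing $M = \sup_{x,u}|V'_x(u)| < \infty$ so that $\|\langle D\varphi, DV\rangle\|_\pi \le M\,(E_\pi\|D\varphi\|^2)^{1/2}$, the resulting quadratic inequality for $(E_\pi\|D(\phi_k-\phi_m)\|^2)^{1/2}$ shows that $D\phi_k$ is Cauchy in $L^2(\pi;H)$, with limit $Df$ by closability of the gradient. Consequently $\dot{\mathcal{K}}\phi_k = \dot{\mathcal{L}}\phi_k - \langle D\phi_k, DV\rangle \to \dot{\mathcal{L}}f - \langle Df, DV\rangle$ in $\mathcal{H}$, so $f \in \mathcal{D}(\dot{\mathcal{K}})$.

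The real work is to dispose of the hypothesis $V \in C_b^2$, since under (1)--(4) the potential is merely $C_b^1$. I would mollify each $V_x$ in the $u$-variable to obtain $V^{(n)} \in C_b^2(H)$ that stay uniformly bounded, $1$-periodic, and Lipschitz with the \emph{same} constant $L$ as $V'_x$, and satisfy $\sup_{x,u}|(V^{(n)}_x)' - V'_x| \to 0$ (using that $V'_x$ is uniformly continuous, uniformly in $x$). The decisive feature is that $|D^2V^{(n)}|_\infty \le L$ for all $n$, so fixing $\lambda > L$, Lemma \ref{lemma-core} gives $f_n \in C_b^1(\dot E)\cap\mathcal{D}(\dot{\mathcal{L}})$ solving $\lambda f_n - \dot{\mathcal{L}}f_n + \langle Df_n, DV^{(n)}\rangle = g$ together with the \emph{uniform} bounds $\|f_n\|_\infty \le \lambda^{-1}\|g\|_\infty$ and, from \eqref{estimate-core}, $\|Df_n\|_\infty \le (\lambda-L)^{-1}\|Dg\|_\infty$. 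Rewriting through the true operator gives $\lambda f_n - \dot{\mathcal{K}}f_n = g + \langle Df_n, DV - DV^{(n)}\rangle$, and the correction is dominated pointwise by $\|Df_n\|_\infty \sup_{x,u}|(V^{(n)}_x)' - V'_x| \to 0$. Thus $g$ is the $\mathcal{H}$-limit of the range elements $\lambda f_n - \dot{\mathcal{K}}f_n$, which finishes the density argument.

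The principal obstacle is this approximation, because replacing $V$ by $V^{(n)}$ also displaces the reversible measure $\pi^{(n)}$ and the ambient space $\mathcal{H}^{(n)} = L^2(\pi^{(n)})$ in which Lemma \ref{lemma-core} operates. I would handle it by noting that, since $V$ and all $V^{(n)}$ are uniformly bounded, the densities in \eqref{reversible probability measure} are bounded above and below uniformly in $n$; hence the $\pi^{(n)}$ are mutually equivalent to $\pi$ with uniformly comparable $L^2$-norms, and since $f_n$, $Df_n$ and $\dot{\mathcal{L}}f_n$ are all \emph{bounded}, both the membership $f_n \in \mathcal{D}(\dot{\mathcal{L}})$ relative to $\mathcal{H}$ and the convergence of the correction term transfer from pointwise/uniform statements to genuine convergence in $\mathcal{H}$. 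The point that must be written with care is that the gradient estimate \eqref{estimate-core} survives mollification only because it depends on $V$ through the Lipschitz constant $L$ of $V'_x$ alone, and never through an honest second derivative.
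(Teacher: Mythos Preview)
Your proof is correct and follows essentially the same route as the paper's: approximate $V$ by $C_b^2$ potentials with uniformly bounded second derivatives, invoke Lemma~\ref{lemma-core} to solve the approximate resolvent equation, use that $\mathcal{E}_A(\dot E)$ is a core for $\dot{\mathcal{L}}$ together with \eqref{dirichlet form} and closability of the Dirichlet form to place $f_m$ in $\mathcal{D}(\dot{\mathcal{K}})$, and then let the error $\langle Df_m, DV_m - DV\rangle$ vanish via the uniform gradient bound \eqref{estimate-core}. The only cosmetic differences are that you spell out the mollification (yielding uniform convergence of $DV^{(n)}$ rather than the paper's $L^2(\pi)$ convergence) and you flag the change-of-measure issue between $\pi$ and $\pi^{(n)}$, which the paper passes over silently; your resolution of it is fine, since the uniformly bounded densities make the two $L^2$ spaces coincide with equivalent norms.
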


To prove Proposition \ref{maximal dissipativity}, we make use of the next lemma, originally stated in \cite[Lemma 3.3]{PT01}. In the following contents, $|\cdot|_{L(H,H)}$ stands for the norm on the space of linear functionals from $H$ to $H$, $|D^2V|_\infty = \sup_{h \in H}|D^2V(h)|_{L(H, H)}$ for $V \in C_b^2(H)$ and $\|Df\|_\infty = \sup_{h \in \dot E}\|Df(h)\|$ for $f \in C_b^1(\dot E)$. 

\begin{lem} \label{lemma-core}
If $V \in C_b^2(H)$, $\lambda > |D^2V|_\infty$ and $g \in \cE_A(\dot E)$, then the equation 
\begin{equation} \label{resolvent equation-core}
\lambda f - \dot\cL f + \langle Df, DV \rangle = g 
\end{equation}
has a solution $f \in C_b^1(\dot E) \cap \dom(\dot\cL)$. Furthermore, the following estimate holds: 
\begin{equation} \label{estimate-core}
\|Df\|_\infty \leq \left(\lambda - |D^2V|_\infty\right)^{-1}\|Dg\|_\infty. 
\end{equation}
\end{lem}

\begin{proof}
Fix some $\lambda > |D^2V|_\infty$ and write $E_\bP^{\dot v} = E_\bP[~\cdot~|~\dot u(0) = \dot v]$. Define 
$$f(\dot v) = \int_0^\infty e^{-\lambda r}E_\bP^{\dot v}[g(\dot u(r))]dr. $$
It follows from $V \in C_b^2(H)$ that $f \in C_b^1(\dot E)$ and 
$$\langle Df(\dot v), h \rangle = \int_0^\infty e^{-\lambda r}E_\bP^{\dot v}[\langle Dg(\dot u(r)), Y(r) \rangle]dr, $$
where $Y(r) \in H$ is the solution to the deterministic PDE 
$$dY(r) = AY(r)dr - D^2V(X(r))(Y(r))dt, \quad Y(0) = h. $$
Thus \eqref{estimate-core} follows from the fact that $\|Y(r)\| \leq \exp(r|D^2V|_\infty)\|h\|$. For \eqref{resolvent equation-core}, write 
\begin{equation*}
\begin{aligned}
\dot\cR_tf(\dot v) - f(\dot v) =&\ \left(\dot\cR_tf(\dot v) - E_\bP^{\dot v}[f(\dot u(t))]\right) + \left(E_\bP^{\dot v}[f(\dot u(t))] - f(\dot v)\right) \\
=&\ \mathbb{E}^{\dot v}\left[f\left(u(t) + \int_0^t S(t - r)[DV(u(r))]dr\right) - f(u(t))\right] \\
&+ \int_0^\infty e^{-\lambda r}E_\bP^{\dot v}[g(\dot u(t + r)) - g(\dot u(r))]dr. 
\end{aligned}
\end{equation*}
By elementary calculation, $f \in \dom(\dot\cL)$ and \eqref{resolvent equation-core} holds. 
\end{proof}

\begin{proof}[Proof of Proposition \ref{maximal dissipativity}]
Since we only have $V \in C_b^1(H)$ in the our assumptions, we first choose a sequence $V_m \in C_b^2(H)$ such that $|D^2V_m|_\infty$ are uniformly bounded and 
$$\lim_{m \rightarrow \infty} E_\pi \left[\|DV_m - DV\|^2\right] = 0. $$
Fix $g \in \cE_A(\dot E)$. By Lemma \ref{lemma-core}, we can choose some $\lambda > 0$ such that \eqref{resolvent equation-core} can be solved by some $f_m \in C_b^1(\dot E) \cap \dom(\dot\cL)$ with $V$ replaced by $V_m$. In view of \eqref{computation-core}, $\dot\cR_t[\cE_A(\dot E)] \subseteq \cE_A(\dot E)$, therefore $\cE_A(\dot E)$ forms a core of $\dot\cL$ (\cite[Proposition 1.3.3]{EK05}). Thus, we can pick $f_{m, n} \in \cE_A(\dot E)$ such that both $f_{m, n} \rightarrow f_m$ and $\dot\cL f_{m, n} \rightarrow \dot\cL f_m$ hold in $\cH$ as $n \rightarrow \infty$. Combining these arguments with \eqref{dirichlet form}, we get 
\begin{equation*}
\begin{aligned}
&\limsup_{n_1, n_2 \rightarrow \infty} E_\pi \left[\|Df_{m, n_1} - Df_{m, n_2}\|^2\right] \\
= &\ 2\limsup_{n_1, n_2 \rightarrow \infty} E_\pi \left[\left(f_{m, n_1} - f_{m, n_2}\right)\langle Df_{m, n_1} - Df_{m, n_2}, DV \rangle\right] \\
\leq&\ 2\|DV\|_\infty \liminf_{n_1, n_2 \rightarrow \infty} \left\{\|f_{m, n_1} - f_{m, n_2}\|_\pi \left[E_\pi \|Df_{m, n_1} - Df_{m, n_2}\|^2\right]^{\frac{1}{2}}\right\}, 
\end{aligned}
\end{equation*}
thus $E[\|Df_{m, n_1} - Df_{m, n_2}\|^2]$ vanishes as $n_1$, $n_2 \rightarrow \infty$. Since the Dirichlet form in \eqref{dirichlet form} is closable in $\cH$ (\cite[\uppercase\expandafter{\romannumeral2}.3]{MR92}), we know that $E_\pi [\|Df_{m, n} - Df_m\|^2]$ also vanishes as $n \rightarrow \infty$. Therefore 
$$\lim_{n \rightarrow \infty} \dot\cK f_{m, n} = \dot\cL f_m - \langle Df_m, DV \rangle $$
holds in $\cH$, so that $f_m \in \dom(\dot\cK )$ for all $m$ and $\dot\cK f_m = \dot\cL f_m - \langle Df_m, DV \rangle$. By \eqref{estimate-core}, $\|Df_m\|_\infty$ is uniformly bounded, then we can conclude from the definition of $\dot\cK$ in \eqref{generator-pre} that 
$$\lim_{m \rightarrow \infty} \|g - (\lambda f_m - \dot\cK f_m)\|_\pi^2 \leq \lim_{m \rightarrow \infty} \left\{\|Df_m\|_\infty \cdot E_\pi\|DV_m - DV\|^2\right\} = 0, $$
where the vanishment is due to the assumption on $V_m$. The estimate above shows that the image of $\lambda - \dot\cK$ is dense in $\cH$. Now the maximal dissipativity follows from the classical Lumer-Phillips theorem (see, e.g., \cite[p. 17, Proposition 1.3.1]{EK05} and \cite{Y80}). 
\end{proof}

We have proved that $\{\dot\cP_t\}$ is a strong continuous Markov semigroup on $\cH$, whose infinitesimal generator $(\dom(\dot\cK), \dot\cK)$ has a core $\cE_A(\dot E)$, the linear span of exponential functions. Then from \eqref{dirichlet form}, it is clear that the Fr\'echet differential operator $D$ can be extended to $\dom(\dot\cK)$. We still denote the extended operator by $D$. Based on these results and notations, it is not hard to conclude an It\^o formula for functions $f \in \dom(\dot\cK)$. 

\begin{cor} \label{Ito}
For all $f \in \dom(\dot\cK)$, the following equation 
\begin{equation}\label{ito-equation}
f(\dot u(t)) = f(\dot u(0)) + \int_0^t \dot\cK f(\dot u(r))dr + \int_0^t \langle Df(\dot u(r)), dW_r \rangle 
\end{equation}
holds $\pi$-a.s. and in $\cH$. 
\end{cor}

\begin{proof}
If $f \in \cE_A(\dot E)$, \eqref{ito-equation} indeed only concerns finite dimensions, thus follows from the classical It\^o formula. For general $f \in \dom(\dot\cK )$, pick $f_m \in \cE_A(\dot E)$ such that $f_m$ and $\dot\cK f_m$ converge to $f$ and $\dot\cK f$ in $\cH$, respectively. The corollary then follows from \eqref{dirichlet form} and It\^o isometry. 
\end{proof}

\end{section}

\begin{section}{Proofs of Theorems \ref{clt} and \ref{invariance principle}}

Before illustrating the proof of the central limit theorem, we define two Hilbert spaces related to the operator $\dot\cK $. For $f \in \cE_A(\dot E)$ let 
$$\|f\|_1^2 = \langle -\dot\cK f, f \rangle_\pi = \frac{1}{2}E_\pi\|Df\|^2. $$
Let $\cH_1$ be the completion of $\cE_A(\dot E)$ under $\|\cdot\|_1$, which turns to be a Hilbert space if all functions $f$ such that $\|f\|_1 = 0$ are identified with $0$. On the other hand, let 
$$\mathcal{I} = \left\{f \in \cH; \|f\|_{-1} \triangleq \sup\left\{\langle f, g \rangle_\pi \left| g \in \cE_A(\dot E), \|g\|_1 = 1\right.\right\} < \infty \right\} $$
Let $\cH_{-1}$ be the completion of $\mathcal{I}_{-1}$ under $\|\cdot\|_{-1}$, which also becomes a Hilbert space if all functions $f$ with $\|f\|_{-1} = 0$ are identified with $0$. Denote by $\langle \cdot, \cdot \rangle_1$ and $\langle \cdot, \cdot \rangle_{-1}$ the inner products defined by polarizations in $\cH_{1}$ and $\cH_{-1}$, respectively. For more details about $\cH_1$ and $\cH_{-1}$, see \cite[\S2.2]{KLO12}. 

Pick some $\varphi \in E_0$ and consider a function $V^\varphi$ on $\dot E$ defined as 
$$V^\varphi(\dot v) \triangleq \frac{1}{2}\int_0^1 v(x)\varphi''(x)dx - \int_0^1 V'_x(v(x))\varphi(x)dx. $$
$V^\varphi$ is the drift of \eqref{spde} in the direction of $\varphi$, meaning that 
\begin{equation} \label{decomposition-clt}
\langle u(t), \varphi \rangle = \langle u(0), \varphi \rangle + \int_0^t V^\varphi(\dot u(r))dr + \langle W_t, \varphi \rangle. 
\end{equation}
Since both $\varphi''$ and $V'_x$ are bounded, $V^\varphi$ clearly belongs to $\cH$. Furthermore, for all $g \in \cE_A(\dot E)$ the integration-by-part formula for Wiener measure shows that 
$$\left|\langle V^\varphi, g \rangle_\pi\right| = \frac{1}{2}\left|\int_{\dot E} \langle -\varphi, Dg(\dot v) \rangle\pi(d\dot v)\right| \leq \frac{\sqrt{2}}{2}\|\varphi\|\|g\|_1, $$
which means that $V^\varphi \in \cH_{-1}$ and $\|V^\varphi\|_{-1} \leq \frac{\sqrt{2}}{2}\|\varphi\|$. 

Similar to \eqref{resolvent equation-core}, consider the resolvent equation for $\lambda > 0$: 
\begin{equation}\label{resolvent equation-clt}
\lambda f_\lambda^\varphi - \dot\cK f_\lambda^\varphi = V^\varphi. 
\end{equation}
Taking inner product with $f_\lambda^\varphi$ in both sides of \eqref{resolvent equation-clt} shows that $\|f_\lambda^\varphi\|_1^2 \leq \langle V^\varphi, f_\lambda^\varphi \rangle_\pi$. Since $\dot u(t)$ is reversible under $\pi$ and $V^\varphi \in \cH_{-1}$, we have 
\begin{equation} \label{estimate-clt}
\sup_{\lambda>0}\|\dot\cK f_\lambda^\varphi\|_{-1} = \sup_{\lambda>0}\|f_\lambda^\varphi\|_1 \leq \|V^\varphi\|_{-1} < \infty. 
\end{equation}
As the solution $f_\lambda^\varphi$ lies in $\dom(\dot\cK)$, we can consider the Dynkin's martingale 
$$M_\lambda^\varphi(t) \triangleq f_\lambda^\varphi(\dot u(t)) - f_\lambda^\varphi(\dot u(0)) - \int_0^t \dot\cK f_\lambda^\varphi(\dot u(r))dr. $$
Due to \cite[\S2.6]{KLO12}, under the condition \eqref{estimate-clt}, there exists some $f^\varphi \in \cH_1$ such that $f_\lambda^\varphi$ converges to $f^\varphi$ in $\cH_1$ as $\lambda \downarrow 0$. Now Corollary \ref{Ito} implies that 
\begin{equation} \label{Ito-clt}
\lim_{\lambda \downarrow 0} \mathbb{E}^\pi \left|M_\lambda^\varphi(t) - \int_0^t \langle Df^\varphi(\dot u(r)), dW_r \rangle\right|^2 = \lim_{\lambda \downarrow 0} 2t\|f_\lambda^\varphi - f^\varphi\|_1^2 = 0. 
\end{equation}
Furthermore, under the same condition, we have the error term $R_\lambda^\varphi(t) \triangleq \int_0^t V^\varphi(\dot u(r))dr - M_\lambda^\varphi(t)$ vanishes in the following sense (\cite[p. 51, Proposition 2.8]{KLO12}): 
\begin{equation} \label{vanishment of R}
\lim_{t \rightarrow \infty} \lim_{\lambda \rightarrow 0}\frac{1}{t}\mathbb{E}^\pi \left[R_\lambda^\varphi(t)^2\right] = 0. 
\end{equation}

We are now at the position to give the proof of Theorem \ref{clt}. Combining \eqref{decomposition-clt}, \eqref{Ito-clt}, and \eqref{vanishment of R} yields that as $t \rightarrow \infty$, the limit distribution of $\frac{1}{\sqrt{t}}\langle u(t), \varphi \rangle$ coincides with that of 
$$\frac{1}{\sqrt{t}}\bar{M}^\varphi(t) = \frac{1}{\sqrt{t}} \int_0^t \langle Df^\varphi(\dot u(r)) + \varphi, dW_r \rangle. $$
Now the ergodicity of $\pi$ together with the central limit theorem for continuous martingales (see, e.g., \cite{W07}) shows that for probability measure $\nu$ absolutely continuous with respect to $\mu$, 
\begin{equation} \label{clt-direction}
\lim_{t \rightarrow \infty}E_\nu \left|E_\bP \left[\left.f\left(\frac{\langle u(t), \varphi \rangle}{\sqrt{t}}\right)\right|\sF_0\right] - \int_{\bR} f(y)\Phi_{\sigma_\varphi}(dy)\right| = 0 
\end{equation}
for all $f \in C_b(\bR)$, where $\sigma_\varphi^2 = E_\pi \|Df^\varphi + \varphi\|^2$. 

Finally, observe that for $\varphi \in E_0$ such that $\int_0^1 \varphi(x)dx = 0$, we have $f^{\varphi} = -\langle v, \varphi \rangle$. Therefore, if we set $\varphi = e_j$ in \eqref{clt-direction}, $\sigma_{e_j}$ turns out to be $0$ for each $j \geq 1$. It implies that \eqref{clt-equation} holds with 
\begin{equation} \label{diffusion constant}
\sigma^2 = \sigma_{e_0}^2 = \lim_{\lambda \downarrow 0} E_\pi \|Df_\lambda^{e_0} + e_0\|^2. 
\end{equation}
In \eqref{diffusion constant}, $f_\lambda^{e_0} = (\lambda - \dot\cK )^{-1}V^{e_0}$, and its Fr\'echet derivative should be understood in the extended sense, as mentioned before Corollary \ref{Ito}. This completes the proof of Theorem \ref{clt}. 

Now fix $T > 0$ and consider the $E$-valued process $\{u^{(\epsilon)}(t) = \epsilon u(\epsilon^{-2}t)\}_{t \in [0, T]}$. To prove Theorem \ref{invariance principle}, it is sufficient to verify the tightness. 

\begin{prop} \label{tightness}
Suppose that the law of $u(0)$ is absolutely continuous with respect to $\mu$, then as $\epsilon \downarrow 0$, the laws of $u^{(\epsilon)}$ are tight in $C([0, T], E)$ with respect to the uniform topology. 
\end{prop}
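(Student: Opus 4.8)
The plan is to reduce first to the stationary initial law $\pi$ in \eqref{reversible probability measure}, and then to split $u^{(\epsilon)}$ into its spatial average (the $e_0$-mode) plus the mean-zero fluctuation, proving that the former is tight while the latter vanishes uniformly. For the reduction, note that tightness only requires a uniform-in-$\epsilon$ bound on $P(u^{(\epsilon)} \notin K)$ over compact sets $K \subset C([0, T], E)$; since $\nu \ll \mu$ gives $\dot\nu \ll \pi$ with a density $\rho \in L^1(\pi)$, the elementary bound $P_\nu(A) \leq L\, P_\pi(A) + E_\pi[\rho \mathbf{1}_{\{\rho > L\}}]$ transfers tightness from the stationary process to the one started from $\nu$ (choose $L$ large, then $K$). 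So I would assume $\dot u(0) \sim \pi$. Writing $\Pi_0 f = \langle f, e_0 \rangle e_0$ and $\Pi_0^\perp = I - \Pi_0$, decompose
$$u^{(\epsilon)}(t) = h^{(\epsilon)}(t)\, \mathbf{1} + w^{(\epsilon)}(t), \qquad h^{(\epsilon)}(t) = \epsilon \langle u(\epsilon^{-2}t), e_0 \rangle, \quad w^{(\epsilon)} = \Pi_0^\perp u^{(\epsilon)}.$$

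For the height $h^{(\epsilon)}$ I would reuse the martingale decomposition behind Theorem \ref{clt}. Taking $\varphi = e_0$ in \eqref{decomposition-clt} and passing to the Kipnis--Varadhan limit gives $\langle u(s), e_0 \rangle = \langle u(0), e_0 \rangle + \bar{M}^{e_0}(s) + R^{e_0}(s)$, where $\bar{M}^{e_0}$ is the continuous martingale with bracket $\int_0^s \|Df^{e_0} + e_0\|^2 dr$ and $R^{e_0}$ is controlled by \eqref{vanishment of R}. Thus $h^{(\epsilon)}(t) = \epsilon\langle u(0), e_0\rangle + \epsilon \bar{M}^{e_0}(\epsilon^{-2}t) + \epsilon R^{e_0}(\epsilon^{-2}t)$. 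By ergodicity of $\pi$ the rescaled bracket $\epsilon^2 \langle \bar{M}^{e_0} \rangle_{\epsilon^{-2}t} \to \sigma^2 t$ with $\sigma^2$ as in \eqref{diffusion constant}, so the functional CLT for continuous martingales yields tightness of $\epsilon \bar{M}^{e_0}(\epsilon^{-2}\cdot)$ in $C([0,T], \mathbb{R})$ and convergence to $\sigma B$, while the remainder vanishes uniformly by \eqref{vanishment of R} together with Doob's maximal inequality. Since $y \mapsto y\, \mathbf{1}$ is continuous from $\mathbb{R}$ into $E$, the family $\{h^{(\epsilon)}\mathbf{1}\}$ is tight in $C([0,T], E)$.

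For the fluctuation I would show $\sup_{t \le T}\|w^{(\epsilon)}(t)\|_E \to 0$ in probability, so that $u^{(\epsilon)}$, being the sum of a tight sequence and one converging to $0$, is tight. Projecting \eqref{mild solution} gives
$$\Pi_0^\perp u(s) = S(s)\Pi_0^\perp v + \int_0^s S(s-r)\Pi_0^\perp[-V'_\cdot(u(r, \cdot))]\, dr + \int_0^s S(s-r)\Pi_0^\perp dW_r.$$
On the mean-zero subspace $S$ has a spectral gap: $\|S(\tau)\Pi_0^\perp\|_{E \to E} \le C e^{-\pi^2 \tau/2}$ for $\tau \ge 1$ and is bounded for small $\tau$, so $\int_0^\infty \|S(\tau)\Pi_0^\perp\|_{E \to E}\, d\tau < \infty$. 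Hence, using $\sup_{x, u}|V'_x(u)| < \infty$, the first two terms are bounded in $E$ uniformly in $s$, and after multiplication by $\epsilon$ they vanish in $\sup_{t \le T}$. Everything concentrates in the stochastic convolution $Z(s) = \int_0^s S(s-r)\Pi_0^\perp dW_r$, for which I need the uniform-in-time estimate $\mathbb{E}\sup_{s \in [k, k+1]}\|Z(s)\|_E^p \le C_p$ for all $k \in \mathbb{N}$. Summing over $k \le \epsilon^{-2}T$ gives $\mathbb{E}\sup_{s \le \epsilon^{-2}T}\|Z(s)\|_E^p \le C_p \epsilon^{-2}T$, whence $\mathbb{E}\sup_{t \le T}\|\epsilon Z(\epsilon^{-2}t)\|_E^p \le C_p T\, \epsilon^{p-2} \to 0$ for any $p > 2$.

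The main obstacle is exactly this uniform-in-time sup-norm control of $Z$. Unlike the full covariance $Q_t$, whose trace diverges because of the $e_0$-mode contribution $t$, the covariance of $Z(s)$ equals $\int_0^s S(r)\Pi_0^\perp S^*(r)\Pi_0^\perp dr$, which converges as $s \to \infty$ thanks to the finite sum $\sum_{j \ge 1}(1 - e^{-\pi^2 j^2 s})/(\pi^2 j^2)$; this is precisely what renders the fluctuation field asymptotically stationary. I would obtain the required bound through the factorization method of Da Prato--Zabczyk: represent $Z$ via a fractional-integral kernel, derive uniform-in-$s$ bounds on $\mathbb{E}\|Z(s)\|_E^p$ and a Kolmogorov-type modulus of continuity (Hölder in both $s$ and $x$) from the Neumann heat-kernel estimates, with all constants made independent of $s$ by the spectral gap. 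Upgrading the natural $L^2$-based bounds to the sup-norm of $C[0, 1]$, uniformly over the diverging window $[0, \epsilon^{-2}T]$, is the crux of the argument; once it is in place, assembling the tight height and the vanishing fluctuation finishes the proof.
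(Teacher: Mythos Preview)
Your decomposition into height plus mean-zero fluctuation and the handling of the height via Kipnis--Varadhan match the paper exactly; the paper cites \cite[Theorem 2.32]{KLO12} for tightness of $\epsilon\langle u(\epsilon^{-2}\cdot),e_0\rangle$, which packages the martingale FCLT plus remainder control you describe.

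Where you diverge is on the fluctuation. You aim to show $\sup_{t\le T}\|w^{(\epsilon)}(t)\|_E\to 0$ by (i) the deterministic bound $\sup_s\|Y^\perp(s)\|_E<\infty$ from the integrable semigroup on the mean-zero subspace, and (ii) a uniform moment $\mathbb{E}\sup_{s\in[k,k+1]}\|Z^\perp(s)\|_E^p\le C_p$ obtained via factorization, then the crude union bound over $O(\epsilon^{-2})$ unit intervals. The paper instead proves, by explicit Fourier-series computation, the space--time Kolmogorov increment estimates
\[
E|Y^\perp(t_1,x_1)-Y^\perp(t_2,x_2)|^{2p}\le C(|t_1-t_2|^{p}+|x_1-x_2|^{p}),\qquad
E|Z^\perp(t_1,x_1)-Z^\perp(t_2,x_2)|^{2p}\le C(|t_1-t_2|^{p/2}+|x_1-x_2|^{p}),
\]
with constants independent of the time location. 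Under the parabolic rescaling $(t,x)\mapsto(\epsilon^{-2}t,x)$ and multiplication by $\epsilon$, these bounds pick up extra factors $\epsilon^{p}$ or $\epsilon^{2p}$, so they are uniform in $\epsilon$ and give tightness (indeed, vanishing) in $C([0,T]\times[0,1])\cong C([0,T],E)$ directly via Kolmogorov's criterion.

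Both routes are valid. The paper's buys you an elementary, self-contained computation using only the eigenfunctions $e_j$ and avoids invoking the Da Prato--Zabczyk factorization machinery; it also sidesteps the need to track uniform-in-$s$ constants in the sup-norm, because only increment moments at fixed times enter. Your route is more conceptual (spectral gap $\Rightarrow$ stationary fluctuation $\Rightarrow$ vanishes after $\epsilon$-scaling) and would generalize more readily to settings where explicit eigenfunctions are unavailable, at the cost of relying on factorization and a slightly wasteful $\max\le\sum$ step. Your reduction to the stationary start is sound but not needed in the paper's approach, since the estimates for $Y^\perp$ and $Z^\perp$ are initial-condition-free.
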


\begin{proof}
Denote the three terms in the right-hand side of \eqref{mild solution} by $X(t)$, $Y(t)$, and $Z(t)$, respectively. Furthermore, let 
$$X^\perp(t) \triangleq X(t) - \int_0^1 X(t, x)dx = \sum_{j=1}^\infty \langle X(t), e_j \rangle \cdot e_j. $$
We define $Y^\perp(t)$ and $Z^\perp$ similarly, then due to \eqref{mild solution}, 
\begin{equation} \label{decomposition-tightness}
u^{(\epsilon)}(t) = \epsilon\langle u(\epsilon^{-2}t), e_0 \rangle + \epsilon X^\perp(\epsilon^{-2}t) + \epsilon Y^\perp(\epsilon^{-2}t) + \epsilon Z^\perp(\epsilon^{-2}t). 
\end{equation}
For the first term in the right-hand side of \eqref{decomposition-tightness}, \eqref{decomposition-clt} yields that 
$$\epsilon\langle u(\epsilon^{-2}t), e_0 \rangle = \epsilon\langle u(0), e_0 \rangle - \epsilon\int_0^{\epsilon^{-2}t}V^{e_0}(\dot u(r))dr + \epsilon\langle W_{\epsilon^{-2}t}, e_0 \rangle. $$
By \cite[p. 74, Theorem 2.32]{KLO12}, this term is tight when $\epsilon \downarrow 0$. Meanwhile, as the heat semigroup is contractive, $|\epsilon X^\perp(\epsilon^{-2}t, \cdot)|_\infty \leq \epsilon|u|_\infty$, therefore the second term in the right-hand side of \eqref{decomposition-tightness} vanishes uniformly when $\epsilon \downarrow 0$. 

The tightness of the last two terms can be verified by the following estimates. For all $p >1$, there exists a finite constant $C = C(p) > 0$ such that for all $t_1$, $t_2 \in [0, \infty)$, $x_1, x_2 \in [0, 1]$, 
\begin{equation} \label{estimate-tightness-1}
E\left|Y^\perp(t_1, x_1) - Y^\perp(t_2, x_2)\right|^{2p} \leq C(|t_1 - t_2|^p + |x_1 - x_2|^p); 
\end{equation}
\begin{equation} \label{estimate-tightness-2}
E\left|Z^\perp(t_1, x_1) - Z^\perp(t_2, x_2)\right|^{2p} \leq C(|t_1 - t_2|^{\frac{p}{2}} + |x_1 - x_2|^p). 
\end{equation}
We only prove \eqref{estimate-tightness-1} here. For \eqref{estimate-tightness-2}, since $Z^\perp$ are Gaussian variables, we only need to prove for $p = 1$ and the argument is similar. Noticing that $S(t)e_j = e^{-\pi^2j^2t}e_j$ for $j \geq 0$, we first suppose that $t_1 = t_2 = t$ and write $\delta_x = |x_1 - x_2|$, then 
$$E\left|Y^\perp(t, x_1) - Y^\perp(t, x_2)\right|^{2p} \leq C_1\left[\sum_{j=1}^\infty\int_0^t \left|e^{-\pi^2j^2(t - r)}\sin(\pi j \delta_x)\right|dr\right]^{2p}. $$
Let $\alpha = \lfloor\delta_x^{-\frac{1}{2}}\rfloor$, the right-hand side is bounded by 
$$C_1\left[\sum_{j=1}^\alpha \frac{\delta_x}{\pi j} + \sum_{j=\alpha+1}^\infty \frac{1}{\pi^2j^2}\right]^{2p} \leq C_1\left[\frac{\alpha\delta_x}{\pi} + \frac{1}{\pi^2\alpha}\right]^{2p} \leq C_2\delta_x^p. $$
Next let $x_1 = x_2 = x$ in \eqref{estimate-tightness-1}, write $\delta_t = t_1 - t_2$ and compute similarly, 
$$E\left|Y^\perp(t_1, x) - Y^\perp(t_2, x)\right|^{2p} \leq C_3\left[\sum_{j=1}^\infty \frac{1 - e^{-\pi^2j^2\delta_t}}{\pi^2j^2}\right]^{2p} \leq C_4\delta_t^p. $$
Therefore \eqref{estimate-tightness-1} holds for all $t_1$, $t_2 \in [0, \infty)$ and $x_1$, $x_2 \in [0, 1]$. 

Due to \eqref{estimate-tightness-1} and \eqref{estimate-tightness-2}, the last two terms in \eqref{decomposition-tightness} are tight in $C([0, T]\times[0, 1], \bR)$. Since the topology of $C([0, T]\times[0, 1], \bR)$ and $C([0, T], E)$ are equivalent, they are tight in $C([0, T], E)$. 
\end{proof}

Proposition \ref{tightness} together with Theorem \ref{clt} completes the proof of Theorem \ref{invariance principle}. 

\end{section}

\section*{Acknowledgements}
The author greatly thanks Professor Tadahisa Funaki for his instructive advice and suggestions. The author also greatly thanks Professor Da Prato who kindly suggested him the arguments in \cite{PT01}, which simplified the original proofs. The author is supported by Leading Graduate Course for Frontiers in Mathematical Sciences and Physics (FMSP), MEXT, Japan. 

\bibliography{[BIB]clt_for_spde.bib}

\end{document}